\newtheorem{theorem}{Theorem}[section]
\newtheorem{definition}[theorem]{Definition}
\newtheorem{proposition}[theorem]{Proposition}
\newtheorem{lemma}[theorem]{Lemma}
\newtheorem{corollary}[theorem]{Corollary}
\newtheorem{remark}[theorem]{Remark}
\newtheorem{example}[theorem]{Example}
\newcommand{\C}{\mathbb C}
\newcommand{\Z}{\mathbb Z}
\newcommand{\SL}{\mathit{SL}}
\begin{document}

\title
{A polynomial defined by the $\SL(2;\C)$-Reidemeister torsion for a homology 3-sphere obtained by a Dehn surgery along a $(2p,q)$-torus knot}

\author{Teruaki Kitano}

\address{Department of Information Systems Science, 
Faculty of Science and Engineering, 
Soka University, 
Tangi-cho 1-236, 
Hachioji, Tokyo 192-8577, Japan}

\email{kitano@soka.ac.jp}

\thanks{2010 {\it Mathematics Subject Classification}. 57M27.}

\thanks{{\it Key words and phrases.\/}
Reidemeister torsion, 
a torus knot, Brieskorn homology 3-sphere, 
$\SL(2;\C)$-representation. }

\begin{abstract}
Let $K$ be a $(2p,q)$-torus knot. 
Here $p$ and $q$ are coprime odd positive integers. 
$M_n$ is a 3-manifold obtained by $\frac1n$-Dehn surgery along $K$. 
We consider a polynomial $\sigma_{(2p,q,n)}(t)$ 
whose zeros are the inverses of the Reideimeister torsion of $M_n$ for $\SL(2;\C)$-irreducible representations under some normalization. 
Johnson gave a formula for the case of the $(2,3)$-torus knot 
under another normalization. 
We generalize this formula for a $(2p,q)$-torus knot by using Tchebychev polynomials. 
\end{abstract}

\maketitle
%%%%%%%%%%%%%%%%%%%%%%%%%%%%%
\section{Introduction}

Reidemeister torsion is a piecewise linear invariant for manifolds. 
It was originally defined by Reidemeister, Franz and de Rham in 1930's. 
In 1980's Johnson \cite{Johnson} developed a theory of the Reidemeister torsion 
from the view point of relations to the Casson invariant. 
He also derived an explicit formula for the Reidemeister torsion of homology 3-spheres 
obtained by $\frac1n$-Dehn surgeries along a torus knot for $\SL(2;\C)$-irreducible representations. 

Let $K$ be a $(2p, q)$-torus knot where $p,q$ are coprime, positive odd integers.  
Let $M_n$ be a closed 3-manifold obtained by a $\frac1n$-surgery along $K$. 
We consider the Reidemeister torsion $\tau_\rho(M_n)$ of $M_n$ for an irreducible representation $\rho:\pi_1(M_n)\rightarrow\SL(2;\C)$. 
 
Johnson gave a formula for any non trivial values of $\tau_{\rho}(M_n)$. 
Furthermore in the case of the trefoil knot,  
he proposed to consider 
the polynomial whose zero set coincides 
with the set of all non trivial values $\{\frac{1}{\tau_\rho(M_n)}\}$, 
which is denoted by ${\sigma}_{(2,3,n)}(t)$.
Under some normalization of ${\sigma}_{(2,3,n)}(t)$,  
he gave a 3-term relation 
among ${\sigma}_{(2,3,n+1)}(t),{\sigma}_{(2,3,n)}(t)$ and ${\sigma}_{(2,3,n-1)}(t)$ 
by using Tchebychev polynomials. 

In this paper we consider one generalization of this polynomial 
for a $(2p, q)$-torus knot. 
Main results of this paper are Theorem 4.3 and Proposition 5.1.

\flushleft{Acknowledgements.}
%The author thanks for the referee to read this article carefully 
%and give some useful comments. 
This research was partially supported by JSPS KAKENHI 25400101. 
%%%%%%%%%%%%%%%%%
\section{Definition of Reidemeister torsion}

First let us describe definitions and properties 
of the Reidemeister torsion for $\SL(2;\C)$-representations. 
See Johnson \cite{Johnson}, Kitano \cite{Kitano94-1,Kitano94-2} and Milnor \cite{Milnor61, Milnor62, Milnor66} for details.

Let $W$ be a $d$-dimensional vector space over $\C$ 
and let ${\bf b}=(b_1,\cdots,b_d)$ and
${\bf c}= (c_1,\cdots,c_d)$ be two bases for $W$. 
Setting $b_i=\sum p_{ji}c_{j}$, 
we obtain a nonsingular matrix $P=(p_{ij})$ with entries in $\C$. 
Let $[{\bf b}/{\bf c}]$ denote the determinant of $P$.

Suppose
\[
C_*: 0\rightarrow C_k
\overset{\partial_k}{\rightarrow} 
C_{k-1}
\overset{\partial_{k-1}}{\rightarrow} 
\cdots 
\overset{\partial_{2}}{\rightarrow}
C_1\overset{\partial_1}{\rightarrow} C_0\rightarrow 0
\]
is an acyclic chain complex of finite dimensional vector spaces over $\C$. 
We assume that a preferred basis ${\bf c}_i$ for $C_i$ is given for each $i$. 
Choose some basis ${\bf b}_i$ for $B_i=\mathrm{Im}(\partial_{i+1})$ 
and take a lift of it in $C_{i+1}$, which we denote by $\tilde{\bf b}_i$.
Since $B_i=Z_i=\mathrm{Ker}{\partial_{i}}$, 
the basis ${\bf b}_i$ can serve as a basis for $Z_i$. 
Furthermore since the sequence
\[
0\rightarrow Z_i
\rightarrow C_i
\overset{\partial_{i}}{\rightarrow} B_{i-1}\rightarrow 0
\]
is exact, the vectors $({\bf b}_i,\tilde{\bf b}_{i-1}) $ form a basis for $C_i$. 
Here $\tilde{\bf b}_{i-1}$ is a lift of ${\bf b}_{i-1}$ in $C_i$. 
It is easily shown that
$[{\bf b}_i,\tilde{\bf b}_{i-1}/{\bf c}_i]$ does not depend on the choice 
of a lift $\tilde{\bf b}_{i-1}$. 
Hence we can simply denote
it by $[{\bf b}_i, {\bf b}_{i-1}/{\bf c}_i]$.

\begin{definition}
The torsion of the chain complex $C_*$ is given by the alternating product 
\[
\prod_{i=0}^k[{\bf b}_i, {\bf b}_{i-1} /{\bf c}_i]^{(-1)^{i+1}}
\]
and we denote it by $\tau(C_*)$.
\end{definition}

\begin{remark}
It is easy to see that $\tau(C_\ast)$ does not depend on the choices of the bases 
$\{{\bf b}_0,\cdots,{\bf b}_k\}$. 
\end{remark}

Now we apply this torsion invariant of chain complexes to the following geometric situations. 
Let $X$ be a finite CW-complex and $\tilde X$ a universal covering of $X$. 
The fundamental group $\pi_1 X$ acts on $\tilde X$ from the right-hand side as deck transformations. 
Then the chain complex $C_*(\tilde{X};\Z)$ has the structure of a chain complex of free $\Z[\pi_1 X]$-modules. 

Let $\rho:\pi_1 X\rightarrow \SL(2; \C)$  be a representation. 
We denote the 2-dimensional vector space $\C^2$ by $V$. 
Using the representation $\rho$, $V$ admits the structure of a $\Z[\pi_1 X]$-module 
and then we denote it by $V_\rho$. 
Define the chain complex $C_*(X; V_\rho)$ 
by $C_*({\tilde X}; \Z)\otimes_{\Z[\pi_1 X]} V_\rho$ 
and choose a preferred
basis
\[
(\tilde{u}_1\otimes {\bf e}_1, \tilde{u}_1\otimes {\bf e}_2, \cdots,\tilde{u}_d\otimes{\bf e}_1, \tilde{u}_d\otimes{\bf e}_2)
\]
of $C_i(X; V_\rho)$ where $\{{\bf e}_1 , {\bf e_2}\}$ is a canonical basis of $V=\C^2$ 
and 
$u_1,\cdots,u_d$ are the $i$-cells 
giving a basis of $C_i(X; \Z)$. 

Now we suppose that $C_*(X; V_\rho)$ is acyclic, 
namely all homology groups $H_*(X; V_\rho)$ are vanishing. 
In this case we call $\rho$ an acyclic representation. 

\begin{definition}
Let $\rho:\pi_1(X)\rightarrow \SL(2; \C)$ be an acyclic representation. 
Then the Reidemeister torsion $\tau_\rho(X)\in\C\setminus\{0\}$ is defined by the torsion $\tau(C_*(X; V_\rho))$ of $C_*(X; V_\rho)$. 
\end{definition}

\begin{remark}
\noindent
\begin{enumerate}
\item
We define $\tau_\rho(X)=0$ for a non-acyclic representation $\rho$.
\item
The definition of $\tau_\rho(X)$ depends on several choices. 
However it is well known that the Reidemeister torsion is a piecewise linear invariant for $X$ with $\rho$. 
\end{enumerate}
\end{remark}

Now let $M$ be a closed orientable 3-manifold 
with an acyclic representation $\rho:\pi_1(M)\rightarrow \SL(2; \C)$.  
Here we take a torus decomposition of $M=A\cup_{T^2} B$. 
For simplicity, we write the same symbol $\rho$ 
for restricted representations to images 
of $\pi_1(A),\pi_1(B),\pi_1(T^2)$ in $\pi_1(M)$ by inclusions. 

By this decomposition, 
we have the following formula. 

\begin{proposition}
Let $\rho:\pi_1(M)\rightarrow \SL(2;\C)$ a representation. 
Assume all homogy groups $H_{\ast}(T^{2};V_{\rho})=0$. 
Then 
all homology groups $H_\ast(M;V_\rho)=0$ 
if and only if 
both of all homology groups $H_\ast(A;V_{\rho})=H_\ast(B;V_{\rho})=0$. 
In this case, it holds 
\[
\tau_\rho(M)=\tau_{\rho}(A)\tau_{\rho}(B).
\]
\end{proposition}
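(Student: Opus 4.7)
The plan is to split the statement into a homological part (the equivalence of acyclicity) and a torsion part (the product formula), treating each via a Mayer--Vietoris argument.

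For the first part, I would write down the Mayer--Vietoris long exact sequence with twisted coefficients associated to the decomposition $M = A\cup_{T^{2}}B$:
\[
\cdots \to H_i(T^2;V_\rho) \to H_i(A;V_\rho)\oplus H_i(B;V_\rho) \to H_i(M;V_\rho) \to H_{i-1}(T^2;V_\rho) \to \cdots
\]
The hypothesis $H_\ast(T^2;V_\rho)=0$ collapses this to isomorphisms $H_i(M;V_\rho) \cong H_i(A;V_\rho)\oplus H_i(B;V_\rho)$ in every degree, so the biconditional is immediate.

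For the product formula, I would work at the chain level. Choosing CW-structures on $A$ and $B$ that restrict to a common CW-structure on $T^2$, one obtains a short exact sequence of based chain complexes
\[
0 \to C_\ast(T^2;V_\rho) \to C_\ast(A;V_\rho)\oplus C_\ast(B;V_\rho) \to C_\ast(M;V_\rho) \to 0
\]
with preferred cellular bases in each degree, and all three complexes are acyclic by the first part. Milnor's multiplicativity formula for the torsion of a short exact sequence of acyclic based complexes then yields
\[
\tau_\rho(M)\cdot\tau_\rho(T^2) = \tau_\rho(A)\cdot\tau_\rho(B).
\]
The missing input is $\tau_\rho(T^2)=1$. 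Using the one-vertex CW-structure on $T^2$ with 1-cells representing a meridian $\mu$ and longitude $\lambda$ and one 2-cell attached along $[\mu,\lambda]$, the twisted chain complex becomes $0\to V\to V\oplus V\to V\to 0$ with boundary maps built from $\rho(\mu)-I$ and $\rho(\lambda)-I$ via Fox calculus; the commutativity $\rho(\mu)\rho(\lambda)=\rho(\lambda)\rho(\mu)$ forces the alternating product of determinants on compatible image/kernel bases to collapse to $1$.

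The main obstacle is the normalization bookkeeping in Milnor's multiplicativity: one has to lift bases for the image subspaces coherently across the three complexes so that the contributions telescope with matching signs, and then verify that the geometric bases chosen in the definition of $\tau_\rho$ match the ones used in the algebraic formula. Once these sign and basis conventions are pinned down, and the identity $\tau_\rho(T^2)=1$ is checked by the direct computation above, the proposition follows.
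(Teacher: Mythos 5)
The paper offers no proof of this proposition at all: it is stated as a known fact, implicitly quoted from Milnor's multiplicativity theorem and Johnson's notes, so there is no ``paper's argument'' to compare against. Your outline is the standard proof and is essentially correct: Mayer--Vietoris with $V_\rho$-coefficients gives the acyclicity equivalence, and Milnor's multiplicativity for the short exact sequence of based acyclic complexes gives $\tau_\rho(A)\tau_\rho(B)=\tau_\rho(T^2)\tau_\rho(M)$, after which $\tau_\rho(T^2)=1$ finishes the job. Two points deserve to be made explicit rather than left as ``bookkeeping.'' First, the short exact sequence of twisted complexes requires identifying $C_*(p^{-1}(A))\otimes_{\Z[\pi_1 M]}V$ (where $p^{-1}(A)$ is the preimage of $A$ in the universal cover of $M$, a disjoint union of copies of a cover of $A$) with $C_*(\tilde A)\otimes_{\Z[\pi_1 A]}V$ for the restricted representation --- this is the content of the paper's remark that $\rho$ is silently restricted to $\pi_1(A)$, $\pi_1(B)$, $\pi_1(T^2)$, and it is what makes the three torsions in the formula refer to the same coefficient data. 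Second, Milnor's formula in general carries an extra factor, the torsion of the long exact homology sequence of the triple viewed as an acyclic complex; you should say explicitly that this factor is $1$ here because all three complexes are acyclic, so the homology sequence is the zero complex. Your computation of $\tau_\rho(T^2)=1$ is correct (one can also get it from the product formula $\tau(X\times S^1)=\tau(X)^{\chi(S^1)}=1$), noting only that the hypothesis $H_*(T^2;V_\rho)=0$ forces at least one of $\rho(\mu)-I$, $\rho(\lambda)-I$ to be invertible, which is what makes the bases you need for the image subspaces available. With those two points filled in, the proposal is a complete proof.
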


%%%%%%%%%%%%%%%%%%%%%%%%%%%%
\section{Johnson's theory}

We apply the above proposition to a 3-manifold obtained by Dehn-surgery along a knot. 
Now let $K\subset S^3$ be a $(2p,  q)$-torus knot with coprime odd integers$p,q$. 
Further let $N(K)$ be an open tubular neighborhood of $K$ 
and $E(K)$ its knot exterior $S^3\setminus {N}(K)$. 
We denote its closure of ${N(K)}$ by $\overline{N}$ which is homeomorphic to $S^{1}\times D^{2}$.
Now we write $M_n$ to a closed orientable 3-manifold obtained by a $\frac1n$-surgery along $K$. 
Naturally there exists a torus decomposition $M_n=E(K)\cup \overline{N}$ of $M_n$. 

\begin{remark}
This manifold $M_n$ is diffeomorphic to a Brieskorn homology 3-sphere $\Sigma(2p,q,N)$ 
where $N=|2pq n+1|$. 
\end{remark}
 
Here the fundamental group of $E(K)$ has a presentation as follows. 
\[
\pi_1(E(K))=\pi_1 (S^3\setminus K)=\langle x,y\ |\ x^{2p} =y^ q\rangle
\]
Furthermore 
the fundamental group $\pi_1(M_n)$ admits the presentation as follows;
\[
\pi_1(M_n)=\langle x,y\ |\ x^{2p} =y^q, m l^n= 1\rangle
\]
where $m=x^{-r}y^s\ (r,s\in\Z,\ 2p s- q r=1)$ is a meridian of $K$ 
and $l=x^{-2p}m^{2p q}=y^{- q}m^{2p q}$ is similarly a longitude. 

Let $\rho:\pi_1(E(K))=\pi_1(S^3\setminus K)\rightarrow\SL(2;\C)$ a representation. 
It is easy to see a given representation $\rho$ can be extended to $\pi_1(M_n)\rightarrow\SL(2;\C)$ as a representation 
if and only if $\rho(ml^n)=E$. 
Here $E$ is the identity matrix in $\SL(2;\C)$. 
In this case by applying Proposition 2.5, 
\[
\tau_\rho(M_n)={\tau_\rho(E(K))}{\tau_\rho(\overline{N})}
\]
for any acyclic representation $\rho:\pi_1(M_n)\rightarrow\SL(2;\C)$. 

Now we consider only irreducible representations of $\pi_1(M_n)$, 
which is extended from the one on $\pi_1(E(K))$. 
It is seen that the set of the conjugacy classes of the $\SL(2 ;\C)$-irreducible
representations is finite. 
Any conjugacy class can be represented by $\rho_{(a,b,k)}$ for some $(a,b,k)$ 
such that
\begin{enumerate}
\item
$0<a<2p,0<b< q, a\equiv b \ \text{mod } 2$,
\item
$0<k<N=|2p q n+1|, k\equiv na\  \text{mod } 2$,
\item
$\mathrm{tr}(\rho_{(a,b,k)}(x))=2\cos \frac{a\pi }{2p}$,
\item
$\mathrm{tr} (\rho_{(a,b,k)}(y))=2 \cos\frac{b\pi}{ q}$,
\item
$\mathrm{tr} (\rho_{(a ,b,k)}(m)) =2 \cos\frac{k\pi}{N}$.
\end{enumerate}

Johnson computed $\tau_{\rho_{(a,b,k)}}(M_{n})$ as follows.

\begin{theorem}[Johnson]
\noindent
\begin{enumerate}
\item
A representation $\rho_{(a,b,k)}$ is acylic if and only if 
$a\equiv b\equiv 1, k \equiv n \text{ mod }2$.
\item
For any acyclic representation ${\rho_{(a,b,k)}}$ with $a\equiv b{\equiv} 1, k\equiv n \text{ mod }2$, 
then 
\[
\tau_{\rho_{(a,b,k)}}(M_{n})
=\frac{1}{2\left(1-\cos\frac{a\pi}{2p}\right) 
\left(1-\cos\frac{b\pi}{ q}\right)
\left(1+\cos\frac{{2}p q k\pi }N\right)}.
\]
\end{enumerate}
\end{theorem}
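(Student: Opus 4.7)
The strategy is to invoke Proposition 2.5 for the torus decomposition $M_n=E(K)\cup_{T^2}\overline{N}$, reducing the computation to $\tau_\rho(M_n)=\tau_\rho(E(K))\,\tau_\rho(\overline{N})$ once acyclicity on each piece is established. I would first dispose of the acyclicity question in part (1) and then compute the two factors to obtain the formula in part (2).

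For part (1), note that the central element $x^{2p}=y^q$ of $\pi_1(E(K))$ must map to $\pm E$. Since $\rho(x)$ has eigenvalues $e^{\pm ia\pi/2p}$, we have $\rho(x^{2p})=(-1)^a E$, and similarly $\rho(y^q)=(-1)^b E$, so consistency forces $a\equiv b\pmod 2$. In the even case $\rho$ descends to the quotient where $x^{2p}$ becomes trivial, and I would show that the Fox-matrix associated with the relator $x^{2p}y^{-q}$ becomes rank-deficient under $\rho$, forcing $H_*(E(K);V_\rho)\neq 0$. The condition $k\equiv n\pmod 2$ I would identify with $\rho(ml^n)=E$ as follows: from $\rho(l)=\rho(y)^{-q}\rho(m)^{2pq}$, together with $\rho(m)^N=(-1)^kE$ where $N=|2pqn+1|$, a short eigenvalue calculation yields $\rho(ml^n)=(-1)^{n+k}E$. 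Hence $\rho$ extends to $\pi_1(M_n)$ exactly when $k\equiv n\pmod 2$, and this is also precisely the condition for $\rho(l)$ to have no eigenvalue equal to $1$, i.e., for $H_*(\overline{N};V_\rho)=0$.

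For part (2), I would compute the two factors in turn. Since $\overline{N}\simeq S^1$ with fundamental group generated by $l$, the cell-chain complex of $S^1$ with coefficients in $V_\rho$ gives $\tau_\rho(\overline{N})=1/\det(E-\rho(l))$; plugging in $\rho(l)=-\rho(m)^{2pq}$ (since $\rho(y^q)=-E$ when $b$ is odd) and $\mathrm{tr}(\rho(m^{2pq}))=2\cos(2pqk\pi/N)$, this becomes $1/[2(1+\cos(2pqk\pi/N))]$. For $\tau_\rho(E(K))$ I would apply the Fox-calculus/Wada formula to the deficiency-one presentation $\langle x,y\mid x^{2p}y^{-q}\rangle$, obtaining
\[
\tau_\rho(E(K))=\frac{\det\rho(\partial r/\partial y)}{\det(\rho(x)-E)}.
\]
The image of $1+y+\cdots+y^{q-1}$ telescopes via $(E-\rho(y)^q)(E-\rho(y))^{-1}=2(E-\rho(y))^{-1}$ since $\rho(y^q)=-E$, so after evaluating $2-\mathrm{tr}(\rho(x))=2(1-\cos(a\pi/2p))$ and similarly for $y$, this reduces to $1/[(1-\cos(a\pi/2p))(1-\cos(b\pi/q))]$. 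Multiplying the two factors yields the claimed formula.

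The main obstacle I expect is the \emph{only if} direction of (1): carefully establishing that the chain complex of $E(K)$ genuinely fails to be acyclic in the even-parity cases demands exhibiting an explicit invariant vector (or nontrivial cycle) arising from the eigenvalue coincidence $\rho(x^{2p})=E$, rather than merely noting that the routine Fox-matrix computation breaks down.
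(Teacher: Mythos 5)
The paper does not prove this theorem at all: it is quoted verbatim from Johnson's unpublished lecture notes (and Remark 3.3 points to \cite{Kitano94-1} for a proof in the more general Seifert-fibered setting), so there is no in-paper argument to compare against. Judged on its own, your proposal follows exactly the standard route that Johnson and the cited reference take --- multiplicativity of torsion over the decomposition $M_n=E(K)\cup_{T^2}\overline{N}$, Wada/Fox calculus for the knot exterior, and a direct chain-level computation for the solid torus --- and the computations check out: $\rho(y)^q=-E$ for odd $b$ gives $\rho(l)=-\rho(m)^{2pq}$, hence $\tau_\rho(\overline{N})=1/\bigl(2(1+\cos\frac{2pqk\pi}{N})\bigr)$, and the telescoping $\Phi(1+y+\cdots+y^{q-1})=(E-\rho(y)^q)(E-\rho(y))^{-1}=2(E-\rho(y))^{-1}$ yields $\tau_\rho(E(K))=1/\bigl((1-\cos\frac{a\pi}{2p})(1-\cos\frac{b\pi}{q})\bigr)$, whose product is the stated formula.

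Two small corrections. First, the ``only if'' direction you flag as the main obstacle is actually easier than you fear: when $a\equiv b\equiv 0\pmod 2$ one has $\rho(x)^{2p}=\rho(y)^q=E$, so \emph{both} Fox-derivative blocks $\Phi(\partial r/\partial x)$ and $\Phi(\partial r/\partial y)$ vanish, hence $\partial_2=0$ on the presentation $2$-complex and $H_2\cong V\neq 0$ outright --- no explicit invariant vector beyond this is needed. Second, your closing claim that $k\equiv n\pmod 2$ is ``also precisely the condition for $\rho(l)$ to have no eigenvalue $1$'' is not right: the eigenvalues of $\rho(l)=-\rho(m)^{2pq}$ are $-e^{\pm 2pqk\pi i/N}$, and since $\gcd(2pq,N)=1$ and $0<k<N$ these are never $1$, so $H_*(\overline{N};V_\rho)=0$ holds for \emph{all} such $k$. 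The parity condition $k\equiv n$ is solely the extension condition $\rho(ml^n)=(-1)^{n+k}E=E$ (and, given the constraint $k\equiv na$ from the classification, it is automatic once $a$ is odd). Neither issue affects the validity of the overall argument.
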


\begin{remark}
\noindent
\begin{itemize}
\item
In fact Johnson proved this theorem for any torus knot, not only for a $(2p,q)$-torus knot. 
\item
This Johnson's result was generalized for any Seifert fiber manifold in \cite{Kitano94-1}. 
Please see \cite{Kitano94-1} as a reference. 
\item
In general, it is not true that the set of $\{\tau_\rho(M_n)\}$ is finite. 
There exists a manifold whose Reidemsiter torsion can be variable continuously. 
Please see \cite{Kitano94-2}.
\end{itemize}
\end{remark}

Here assume $K=T(2,3)$ is the trefoil knot. 
By considering the set of non trivial values $\tau_{\rho}(M_{n})$ for irreducible representation $\rho:\pi_{1}(M_{n})\rightarrow\SL(2;\C)$, 
Johnson defined the polynomial $\bar{\sigma}_{(2, 3,n)}(t)$ 
of one variable $t$ whose zeros are the set of $\{\frac{1}{\frac12{\tau_\rho(M_{n})}}\}$. 
which is well defined up to multiplications of nonzero constants. 

\begin{theorem}[Johnson]
Under normalization by $\bar{\sigma}_{(2,3,n)}(0)=(-1)^n$, 
there exists the 3-term relation s.t. 
\[
\bar{\sigma}_{(2,3,n+1)}(t)=(t^3-6t^2+9t-2)\bar{\sigma}_{(2,3,n)}(t)
-\bar{\sigma}_{(2,3,n-1)}(t).
\]
\end{theorem}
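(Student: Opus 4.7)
The plan is to identify $\bar{\sigma}_{(2,3,n)}(t)$ explicitly as a Chebyshev polynomial of the second kind in a suitable variable; once this is done, the three-term relation is immediate from a standard trigonometric identity.

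Step 1 (reading off the roots). Specializing Theorem 3.2 to $K = T(2,3)$, the only admissible labels are $a=b=1$, and with $N = 6n+1$ the formula collapses to
\[
\tau_{\rho_{(1,1,k)}}(M_n) = \frac{1}{1+\cos(6k\pi/N)}.
\]
Hence the roots of $\bar{\sigma}_{(2,3,n)}(t)$ are $t_k = 2 + 2\cos(6k\pi/N)$ for $0 < k < N$ with $k \equiv n \pmod{2}$. Since $N$ is odd, each pair $\{k, N-k\}$ in $\{1,\ldots,N-1\}$ consists of one element of each parity, and the two give the same value of $\cos$; since $\gcd(6,N) = 1$, multiplication by $6$ is a bijection of $\Z/N$. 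These two observations together identify the $3n$ distinct roots with $\{2 + 2\cos(2j\pi/N) : 1 \le j \le 3n\}$.

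Step 2 (substitution and Chebyshev identification). Setting $t = 2 + 2\cos\phi$, so that $t = 4\cos^2(\phi/2)$, a short expansion using the triple-angle formula gives the key identity
\[
t^3 - 6t^2 + 9t - 2 = (t-2)^3 - 3(t-2) = 2\cos(3\phi).
\]
Because the Chebyshev polynomial $U_{6n}(x)$ of the second kind is an even polynomial of degree $6n$, the expression $U_{6n}(\sqrt{t}/2)$ is a genuine polynomial in $t$ of degree $3n$; its roots in $\phi$ are exactly $\phi = 2j\pi/N$ for $1 \le j \le 3n$, matching Step 1. Moreover $U_{6n}(0) = \sin((6n+1)\pi/2) = (-1)^n$ matches the stipulated normalization, so
\[
\bar{\sigma}_{(2,3,n)}(t) = U_{6n}(\sqrt{t}/2),
\]
or equivalently $\bar{\sigma}_{(2,3,n)}(2 + 2\cos\phi) = \sin((6n+1)\phi/2)/\sin(\phi/2)$.

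Step 3 (the recursion). Under the substitution of Step 2, the claimed recursion becomes
\[
\frac{\sin((6n+7)\phi/2) + \sin((6n-5)\phi/2)}{\sin(\phi/2)} = 2\cos(3\phi)\cdot\frac{\sin((6n+1)\phi/2)}{\sin(\phi/2)},
\]
which is immediate from $\sin A + \sin B = 2\sin\frac{A+B}{2}\cos\frac{A-B}{2}$ applied with $A = (6n+7)\phi/2$ and $B = (6n-5)\phi/2$. The main obstacle I foresee is Step 2: one must verify carefully that the evenness of $U_{6n}$ genuinely makes $U_{6n}(\sqrt{t}/2)$ a polynomial in $t$, and that the Johnson root set of Step 1 coincides with the Chebyshev nodes $\{2+2\cos(2j\pi/N)\}$ (which uses both $\gcd(6,N)=1$ and the parity reflection $k \mapsto N-k$). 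Once these bookkeeping issues are handled, Steps 1 and 3 are routine.
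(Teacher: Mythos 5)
Your argument is correct, and it is essentially the paper's own mechanism: the paper states this theorem without proof (it is quoted from Johnson), but its proof of the general case (Theorem 4.3 together with Proposition 5.1) runs on exactly the polynomial you use, since the paper's $X_n(x)=\frac{T_{N+1}(x)-T_{N-1}(x)}{2(x^2-1)}$ equals $\frac{\sin(N\theta)}{\sin\theta}=U_{N-1}(\cos\theta)$ at $x=\cos\theta$, i.e.\ your $U_{6n}$; your Steps~1 and~3 correspond to the paper's Lemma~4.4 and to the identity $2T_mT_n=T_{m+n}+T_{m-n}$, respectively, and your normalization check $U_{6n}(0)=(-1)^n$ is right. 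The only point to tidy up is that your identification $\bar\sigma_{(2,3,n)}(t)=U_{6n}(\sqrt{t}/2)$ is derived only for $n>0$ (where $N=6n+1$ and there are $3n$ roots); for the recursion to hold for all $n$, including the step across $n=0$, you should also verify the cases $n\le 0$, where $N=6|n|-1$, the degree is $3|n|-1$, and an overall sign appears (one checks $U_{-6}(\sqrt{t}/2)=-U_4(\sqrt{t}/2)=-t^2+3t-1$, matching $\bar\sigma_{(2,3,-1)}$) --- this is routine and parallels the paper's equally brief treatment of $n<0$.
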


\begin{remark}
The polynomial $t^3-6t^2+9t-2$ is given by $2T_6\left(\frac12\sqrt{t}\right)$. 
Here $T_6(x)$ is the sixth Tchebychev polynomial. 
\end{remark}

Recall the $n$-th Tchebychev polynomial $T_n(x)$ of the first kind can be defined 
by expressing $\cos n\theta$ as a polynomial in $\cos\theta$. 
We give a summary of these polynomials. 

\begin{proposition}
The Tchebychev polynomials have following properties.
\begin{enumerate}
\item
$T_0(x)=1,T_1(x)=x$.
\item
$T_{-n}(x)=T_n(x)$.
\item
$T_n(1)={1},T_n(-1)=(-1)^n$.
\item
$T_n(0)=\begin{cases}
& 0\text{ if $n$ is odd,}\\
& (-1)^{\frac n2}\text{ if $n$ is even.}
\end{cases}$
\item
$T_{n+1}(x)=2xT_n-T_{n-1}(x)$.
\item
The degree of $T_n(x)$ is $n$.
\item
{$2T_m(x)T_n(x)=T_{m+n}(x)+T_{m-n}(x)$.}
\end{enumerate}
\end{proposition}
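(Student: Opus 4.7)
The plan is to work directly from the defining identity $T_n(\cos\theta) = \cos(n\theta)$ and translate standard trigonometric identities into polynomial identities. The seven properties then follow in a natural order, since each corresponds to a well-known fact about $\cos(n\theta)$.

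First I would dispatch (1) and (2) by direct substitution: $\cos(0\cdot\theta)=1$ and $\cos(1\cdot\theta)=\cos\theta$ yield (1), while the evenness identity $\cos(-n\theta)=\cos(n\theta)$ gives (2). Next I would prove the three-term recurrence (5) from the sum-to-product formula
\[
\cos((n+1)\theta)+\cos((n-1)\theta)=2\cos\theta\cos(n\theta),
\]
which simultaneously (a) justifies by induction that $\cos(n\theta)$ really is a polynomial in $\cos\theta$ (so the definition makes sense), and (b) gives the stated recursion when one sets $x=\cos\theta$. Once (5) is in hand, (6) follows by a straightforward induction: if $\deg T_n=n$ with leading coefficient $2^{n-1}$ for $n\ge 1$, then $\deg(2xT_n)=n+1$ dominates $\deg T_{n-1}=n-1$, so $\deg T_{n+1}=n+1$.

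For (3) and (4), I would specialize $\theta$: setting $\theta=0$ gives $T_n(1)=\cos 0=1$; setting $\theta=\pi$ gives $T_n(-1)=\cos(n\pi)=(-1)^n$; setting $\theta=\pi/2$ gives $T_n(0)=\cos(n\pi/2)$, which vanishes for odd $n$ and equals $(-1)^{n/2}$ for even $n$. Finally, for (7), I would apply the product-to-sum identity
\[
2\cos(m\theta)\cos(n\theta)=\cos((m+n)\theta)+\cos((m-n)\theta),
\]
so that $2T_m(\cos\theta)T_n(\cos\theta)=T_{m+n}(\cos\theta)+T_{m-n}(\cos\theta)$ for all $\theta$. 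Since both sides are polynomials in $x=\cos\theta$ and agree on the infinite set $[-1,1]$, they agree as polynomials.

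The only real subtlety is conceptual rather than computational: one must first justify that $\cos(n\theta)$ can be written as a polynomial in $\cos\theta$ at all, so that ``$T_n(x)$'' is even defined. This is exactly what the induction behind (5) and (6) supplies, and once it is in place the remaining items are a bookkeeping exercise in elementary trigonometry.
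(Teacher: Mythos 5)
Your proof is correct, and it follows exactly the route the paper implicitly relies on: the paper states this proposition as a summary of standard facts with no proof at all, defining $T_n$ by the relation $T_n(\cos\theta)=\cos n\theta$, and every item you derive comes from the corresponding trigonometric identity in the standard way. Nothing to add or correct.
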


He we put a short list of  $T_n(x)$.
\begin{itemize}
\item
$T_0(x)=1$,
\item
$T_1(x)=x$,
\item
$T_2(x)=2x^2-1$,
\item
$T_3(x)=4x^3-3x$,
\item
$T_4(x)=8x^4-8x^2+1$,
\item
$T_5(x)=16x^5-20x^3+5x$,
\item
$T_6(x)=32x^6-48x^4+18x^2-1$.
\end{itemize}

\begin{example}
Put $p=1, q=3$ and $n=-1$. 
Then $N=|2\cdot 3\cdot(-1)+1|=5$ and $M_{-1}=\Sigma(2,3,5)$. 
In this case, it is easy to see that 
$a=b=1$ and $k=1,3$. 
By the above formula, we obtain 
\[
\begin{split}
\tau_{\rho_{(1,1,k)}}(M_{-1})
&=\frac{1}{{2\left(1-\cos\frac{\pi}2\right) 
\left(1-\cos\frac{\pi}3\right)}{\left(1+\cos\frac{6k\pi}5\right)}}\\
&=\frac{1}{{2(1-0)(1-\frac12)(1+\cos\frac{6k\pi}5)}}\\
&=\frac{1}{{1+\cos\frac{6k\pi}5}}\\
&={3\pm \sqrt{5}}.
\end{split}
\]
Hence 
we have two non trivial values of $\frac{1}{\frac{1}{2}\tau_{\rho}(M_{-1})}$ as 
\[
\begin{split}
\frac{1}{\frac{1}{2}\tau_{\rho}(M_{-1})}
&=\frac{1}{\frac{3\pm\sqrt{5}}{2}}\\
&=\frac{2}{3\pm\sqrt{5}}\\
&=\frac{3\pm\sqrt{5}}{2}.
\end{split}
\]
Therefore we have 
\[
\left(t-\left(\frac{3-\sqrt{5}}{2}\right)\right)
\left(t-\left(\frac{3+\sqrt{5}}{2}\right)\right)
=t^{2}-3t+1.
\]
Under Johnson's normalization $\bar{\sigma}_{(2,3,-1)}(0)=-1$, 
\[
\bar{\sigma}_{(2,3,-1)}(t)=-t^{2}+3t-1.
\]
Next put $n=1$. 
In this case 
\[
\begin{split}
\tau_{\rho_{(1,1,k)}}(M_{1})
&=\frac{1}{{2\left(1-\cos\frac{\pi}2\right) \left(1-\cos\frac{\pi}3\right)}{\left(1+\cos\frac{6k\pi}{7}\right)}}\\
&=\frac{1}{\left(1+\cos\frac{6k\pi}5\right)}.
\end{split}
\]
We can see as 
\[
\begin{split}
&\left(t-2{\left(1+\cos\frac{6\pi}5\right)}\right)\left(t-2{\left(1+\cos\frac{6\cdot 3\pi}5\right)}\right)\left(t-2{\left(1+\cos\frac{6\cdot 5\pi}5\right)}\right)\\
=&t^{3}-5t^{2}+6t-1\\
=&\bar{\sigma}_{(2,3,1)}(t).
\end{split}
\]
On the other hand, 
by using Johnson's formula 
\[
\begin{split}
(t^{3}-6t^{2}+9t-2)\bar{\sigma}_{(2,3,0)}(t)-\bar{\sigma}_{(2,3,-1)}(t)
=&(t^{3}-6t^{2}+9t-2)\cdot 1-(-t^{2}+3t-1)\\
=&t^{3}-5t^{2}+6t-1,
\end{split}
\]
we obtain the same polynomial. 
\end{example}

%%%%%%%%%%%%%%
\section{Main theorem}

From this section, we consider the generalization for a $(2p, q)$-torus knot. 
Here $p,q$ are coprime odd integers. 
In this section 
we give a formula of the torsion polynomial 
${\sigma}_{(2p, q,n)}(t)$ 
for $M_n=\Sigma(2p, q,N)$ 
obtained by a $\frac1n$-Dehn surgery along $K$. 
Although Johnson considered the inverses of the half of $\tau_{\rho}(M_{n})$, 
we simply treat torsion polynomials as follows. 

\begin{definition}
A one variable polynomial ${\sigma}_{(2p, q,n)}(t)$ is called the torsion polynomial 
of $M_n$ if the zero set coincides with the set 
of all non trivial values $\{\frac{1}{\tau_\rho(M_n)}\}$ 
and it satisfies the following normalization condition 
as $\sigma_{(2p,q,n)}(0)=(-1)^{\frac{np(q-1)}{2}}$. 
\end{definition}

\begin{remark}
\noindent
\begin{enumerate}
\item
If $n=0$, then clearly $M_{n}=S^{3}$ with the trivial fundamental group. 
Hence we define the torsion polynomial to be trivial. 
\end{enumerate}
\end{remark}

From here assume $n\neq 0$. 
Recall Johnson's formula 
\[
\frac{1}{\tau_{\rho_{(a,b,k)}}(M_{n})}
={2\left(1-\cos\frac{a\pi}{2p}\right) 
\left(1-\cos\frac{b\pi}{ q}\right)
\left(1+\cos\frac{2 p q k\pi}N\right)} 
\]
where $0<a<2p,0<b< q, a\equiv b\equiv 1\text{ mod 2}, k\equiv n\text{ mod }2$. 
Here we put  
\[
C_{(2p, q,a,b)}=\left(1-\cos\frac{a\pi }{2p}\right)\left(1-\cos\frac{b\pi }{ q}\right) 
\]
and we have 
\[
\frac{1}{\tau_{\rho_{(a,b,k)}}(M_n)}=4C_{(2p,q,a,b)}\cdot \frac12\left(1+\cos\frac{2 p q k\pi}N\right).
\]

Main result is the following. 

\begin{theorem}
The torsion polynomial of $M_{n}$ is given by 
\[
{\sigma}_{(2p, q,n)}(t)=
\prod_{(a,b)}Y_{(n,a,b)}(t)
\]
where
\[
Y_{(n, a,b)}(t)
=
\begin{cases}
&
2C_{(2p, q,a,b)}\frac{
T_{{N+1}}\left(\frac{\sqrt{t}}{2\sqrt{C_{(2p, q,a,b)}}}\right)
-T_{{N-1}}\left(\frac{\sqrt{t}}{2\sqrt{C_{(2p, q,a,b)}}}\right)}{t-2C_{(2p, q,a,b)}}\hskip 0.3cm (n>0)\\
&
-2C_{(2p, q,a,b)}\frac{
T_{{N+1}}\left(\frac{\sqrt{t}}{2\sqrt{C_{(2p, q,a,b)}}}\right)
-T_{{N-1}}\left(\frac{\sqrt{t}}{2\sqrt{C_{(2p, q,a,b)}}}\right)}{t-2C_{(2p, q,a,b)}}\ (n<0).\\
%&1\hskip 6.3cm (n=0).
\end{cases}
\]
Here $N=|2pqn+1|$ 
and a pair of integers $(a,b)$ is satisfying the following conditions;
\begin{itemize}
\item
$0<a<2p,0<b< q$, 
\item
$a\equiv b\equiv 1\text{ mod }2$, 
\item
$0<k<N, k\equiv n\text { mod }2$.
\end{itemize}
\end{theorem}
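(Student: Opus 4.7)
The plan is to verify the product formula pair by pair: for each admissible $(a,b)$ I would show that $Y_{(n,a,b)}(t)$ is a polynomial whose zero set is precisely $\{1/\tau_{\rho_{(a,b,k)}}(M_n) : k \in S_n\}$, where $S_n := \{k : 0 < k < N,\ k \equiv n \pmod 2\}$ has cardinality $(N-1)/2$. Since there are exactly $p(q-1)/2$ admissible pairs $(a,b)$, multiplying the factors recovers the full zero set $\{1/\tau_\rho(M_n)\}$, and one is left with verifying the normalization.

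First, rewrite Johnson's formula (Theorem 3.2) in the form
\[
\frac{1}{\tau_{\rho_{(a,b,k)}}(M_n)} = 4C_{(2p,q,a,b)}\cos^2\!\left(\frac{pqk\pi}{N}\right),
\]
using $\tfrac12(1+\cos 2\theta) = \cos^2\theta$. Under the substitution $x = \sqrt{t}/(2\sqrt{C_{(2p,q,a,b)}})$, a value of $t$ is a target zero iff $x^2 = \cos^2(pqk\pi/N)$ for some $k \in S_n$. The combinatorial heart is to identify $\{\cos^2(pqk\pi/N) : k \in S_n\}$ with $\{\cos^2(j\pi/N) : 1 \leq j \leq (N-1)/2\}$. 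Since $\gcd(pq,N)=1$ (noting $N \equiv \pm 1 \pmod{2pq}$), the map $k \mapsto pqk \bmod N$ is a bijection on $\{1,\ldots,N-1\}$; and if $pqk \equiv -pqk' \pmod N$ for distinct $k,k' \in S_n$, then $k+k'=N$, which is odd, contradicting that $k,k'$ share the same parity. Hence $\{pqk \bmod N : k \in S_n\}$ meets each pair $\{j,N-j\}$ exactly once as $j$ ranges over $\{1,\ldots,(N-1)/2\}$, and the identification follows.

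Next, the Chebyshev identity
\[
T_{N+1}(x) - T_{N-1}(x) = 2(x^2-1)\,U_{N-1}(x),
\]
arising from $\cos((N+1)\theta) - \cos((N-1)\theta) = -2\sin(N\theta)\sin\theta$ together with $\sin(N\theta)/\sin\theta = U_{N-1}(\cos\theta)$, reduces the numerator of $Y_{(n,a,b)}$ to a product. The zeros of $U_{N-1}(x)$ are $\cos(j\pi/N)$ for $j=1,\ldots,N-1$; since $N$ is odd, $U_{N-1}$ is an even polynomial, and collapsing $j \leftrightarrow N-j$ exhibits it as a polynomial of degree $(N-1)/2$ in $x^2$ with simple zeros at $\cos^2(j\pi/N)$. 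Substituting $x^2 = t/(4C_{(2p,q,a,b)})$ and removing the factor $(x^2-1)$ corresponding to the root at $k=0$ (which lies outside $S_n$) produces a polynomial in $t$ whose zeros coincide with the set identified in the previous paragraph, confirming the formula for $Y_{(n,a,b)}(t)$ up to an overall scalar.

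The final step is to pin down that scalar through the normalization $\sigma_{(2p,q,n)}(0) = (-1)^{np(q-1)/2}$. Evaluating at $t=0$ uses $T_m(0) = (-1)^{m/2}$ for even $m$, giving $T_{N+1}(0) - T_{N-1}(0) = 2(-1)^{(N+1)/2}$; the exponent $(N+1)/2$ must then be expressed in $n$ via $N = 2pqn+1$ when $n>0$ and $N = 2pq|n|-1$ when $n<0$, invoking the oddness of $p,q$. The outcome in both regimes is $Y_{(n,a,b)}(0) = (-1)^n$, whose $p(q-1)/2$-th power gives the required $(-1)^{np(q-1)/2}$. Sign-tracking in this final step — in particular, the case split between $n>0$ and $n<0$ that forces the opposite overall sign in the two branches of the formula — is the main obstacle; everything else is algebraic bookkeeping coupled with the parity-based bijection of step two.
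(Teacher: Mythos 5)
Your proposal is correct and follows essentially the same route as the paper: your parity-plus-coprimality bijection identifying $\{\cos^2(pqk\pi/N) : k\in S_n\}$ with $\{\cos^2(j\pi/N) : 1\le j\le (N-1)/2\}$ is the content of the paper's Lemma 4.4, and your quotient $\frac{T_{N+1}(x)-T_{N-1}(x)}{2(x^2-1)}$ is exactly the paper's $X_n(x)$, which you merely streamline by recognizing it as $U_{N-1}(x)$ so that the location and simplicity of its zeros are immediate rather than verified by direct evaluation. Your explicit computation of $Y_{(n,a,b)}(0)=(-1)^n$ via $T_{N\pm1}(0)$, which forces the opposite signs in the two branches, is the normalization step the paper asserts but leaves implicit, so it is a welcome completion rather than a deviation.
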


\begin{proof}
%If $n=0$, it is trivial. 
\noindent
\flushleft{Case 1:$n> 0$}

We modify one factor $(1+\cos\frac{2p q k\pi}N)$ 
of $\displaystyle\frac{1}{\tau_\rho(M_n)}$ as follows. 

\begin{lemma}
The set $\{\cos\frac{2p q  k\pi}{N}\ |\ 0<k<N, k\equiv n\text{ mod }2\}$ is equal to  
the set $\{\cos\frac{2p k\pi }{N}\ |\ 0<k<\frac{N}{2}\}$. 
\end{lemma}

\begin{proof}
Now $N=2p q n+1$ is always an odd integer. 

For any $k>\frac{N}{2}$, then clearly $N-k<\frac{N}{2}$. 
Then 
\[
\begin{split}
\cos\frac{2p q (N-k)\pi}{N}
&=\cos\left(2p q\pi -\frac{2p q  k\pi}{N}\right)\\
&=(-1)^{2p q}\cos\left(-\frac{2p q k\pi}{N}\right)\\
&=\cos\left(\frac{2p q  k\pi}{N}\right).
\end{split}
\]
Here if $k$ is even (resp. odd), then $N-k$ is odd (resp. even). 
Hence it is seen 
\[
\{\cos\frac{2 p q k\pi}{N}
\ |\ 0<k<N, k\equiv n\text{ mod }2\}=
\{\cos\frac{2 p q k\pi}{N}\ |\ 0<k<\frac{N}{2}\}. 
\]
For any $k<\frac{N}{2}$, 
there exists uniquely $l$ such that $-\frac{N}{2}< l< \frac{N}{2}$ 
and $l\equiv qk$ mod $N$. 
Further there exists uniquely $l$ such that $0< l< \frac{N}{2}$ and $l\equiv \pm qk$ mod $N$. 
Here 
$\cos\frac{2p q k\pi }{N}=\cos\frac{2p l\pi}{N}$ 
if and only if $2p q k\equiv \pm 2p l$ mod $N$. 
Therefore it is seen that the set 
\[
\left\{\cos\frac{2pq k\pi}{N}\ |\ 0<k<\frac{N}{2}\right\}
=
\left\{\cos\frac{2p k\pi}{N}\ |\ 0<k<\frac{N}{2}\right\}. 
\]
\end{proof}

Now we can modify  
\[
\begin{split}
\frac12\left(1+\cos\frac{2p k\pi}{N}\right)
&=\frac12\cdot2\cos^2\frac{2p k\pi}{2N}\\
&=\cos^2\frac{p k\pi}{N}.
\end{split}
\]

We put 
\[
z_k=\cos\frac{p k\pi}{N}\ (0<k<N) 
\]
and substitute $x=z_k$ to $T_{N+1}(x)$. 
Then it holds 
\[
\begin{split}
T_{N+1}(z_k)
&=\cos\left(\frac{(N+1)(p k\pi)}{N}\right)\\
&=\cos\left(p k\pi+\frac{pk\pi }{N}\right)\\
&=(-1)^{p k}z_k .
\end{split}
\]
Similarly it is seen   
\[
\begin{split}
T_{N-1}(z_k)
&=\cos\left(\frac{(N-1)(p k\pi)}{N}\right)\\
&=\cos\left(p k\pi-\frac{p k\pi}{N}\right)\\
&=(-1)^{p k}z_k.
\end{split}
\]
Hence it holds 
\[
T_{N+1}(z_k)-T_{N-1}(z_k)=0.
\]
By properties of Tchebyshev polynomials, 
it is seen that  
\begin{itemize}
\item
$T_{N+1}(1)-T_{N-1}(1)=0$,
\item
$T_{N+1}(-1)-T_{N-1}(-1)=0$.
\end{itemize}

Therefore we consider the following;
\[
X_n(x)=
\begin{cases}
&
\frac{T_{N+1}(x)-T_{N-1}(x)}{2(x^2-1)}\hskip 0.3cm (n>0)\\
&
-\frac{T_{N+1}(x)-T_{N-1}(x)}{2(x^2-1)}\ (n<0).\\
%& 1. \hskip 5cm (n=0)
\end{cases}
\]

We mention that the degree of $X_n(x)$ is $N-1$. 

By the above computation, 
$z_1,\cdots, z_{N-1}$ are the zeros of $X_n(x)$. 
Further we can see 
\[
\begin{split}
z_{N-k}
&=\cos\frac{p(N-k)\pi}{N}\\
&=\cos(p\pi-\frac{ pk\pi}{N})\\
&=(-1)^p\cos(-\frac{pk\pi}{N})\\
&=(-1)^p\cos(\frac{pk\pi}{N})\\
&=-z_k.
\end{split}
\]
This means $N-1$ roots $z_1,\cdots, z_{N-1}$ of $X_n(x)=0$ occur in a pairs.  
Because $T_{N+1}(x),T_{N-1}(x)$ are even functions, 
they are functions of $x^2$. 
Hence $X_n(x)$ is also an even function. 

Here by replacing $x^2$ by $\frac{t}{4C_{(2p, q,a,b)}}$, 
namely $x$ by $\frac{\sqrt{t}}{2\sqrt{C_{(2p, q,a,b)}}}$, 
we put 
\[
\begin{split}
Y_{(n,a,b)}(t)
&=X_n\left(\frac{\sqrt{t}}{2\sqrt{C_{(2p, q,a,b)}}}\right)\\
&=\frac{T_{N+1}\left(\frac{\sqrt{t}}{2\sqrt{C_{(2p, q,a,b)}}}\right)
-T_{N-1}\left(\frac{\sqrt{t}}{2\sqrt{C_{(2p, q,a,b)}}}\right)}
{2\left(\left(\frac{\sqrt{t}}{2\sqrt{C_{(2p, q,a,b)}}}\right)^2-1\right)}\\
&=\frac
{T_{N+1}\left(\frac{\sqrt{t}}{2\sqrt{C_{(2p, q,a,b)}}}\right)
-T_{N-1}\left(\frac{\sqrt{t}}{2\sqrt{C_{(2p, q,a,b)}}}\right)}
{
2\left(\frac{t}{4C_{(2p, q,a,b)}}-1\right)
}\\
&=2C_{(2p, q,a,b)}
\frac
{T_{N+1}\left(\frac{\sqrt{t}}{2\sqrt{C_{(2p, q,a,b)}}}\right)
-T_{N-1}\left(\frac{\sqrt{t}}{2\sqrt{C_{(2p, q,a,b)}}}\right)}
{
t-2C_{(2p, q,a,b)}
}\\
\end{split}
\]
Here it holds that its degree of $Y_{(n,a,b)}(s)$ is $\frac{N-1}{2}$, 
and the roots of $Y_{(n,a,b)}(t)$ 
are $4C_{(2p, q,a,b)}z_k^2=4C_{(2p, q,a,b)}\cos^2\frac{\pi k}{2p q n+1}, (0<k<\frac{N-1}{2})$, 
which are all non trivial values of $\frac{1}{\tau_{\rho_{(a,b,k)}}(M_n)}$. 
Therefore we obtain the formula. 

\flushleft{Case 2: $n<0$}

In this case we modify $N=|2pqn+1|=2pq|n|-1$. 
By the same arguments, it is easy to see the claim of the theorem can be proved. 
Therefore the proof completes. 
\end{proof}

\begin{remark}
By defining as $X_{0}(t)=1$, it implies $Y_{(0,a,b)}(t)=1$. 
Then the above statement is true for $n=0$.
\end{remark}

\begin{corollary}
The degree of $\sigma_{(2p,q,n)}(t)$ is given by $\frac{(N-1)p(q-1)}{4}$.
\end{corollary}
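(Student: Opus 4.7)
The plan is to read off the degree directly from the product formula in Theorem 4.3, combined with a count of admissible pairs $(a,b)$.

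First, I would isolate the degree of a single factor. From the calculation inside the proof of Theorem 4.3 it was already observed that $\deg Y_{(n,a,b)}(t) = \frac{N-1}{2}$: namely, $X_n(x)$ has degree $N-1$ and is even in $x$, so after substituting $x^2 = t/(4C_{(2p,q,a,b)})$ one obtains a polynomial in $t$ of degree $(N-1)/2$. (Note $N$ is odd since $N = |2pqn+1|$, so $(N-1)/2$ is an integer.) In particular this degree is independent of which pair $(a,b)$ one chooses; it depends only on $n$.

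Next, I would count the admissible pairs. The indexing set consists of $(a,b)$ with $0<a<2p$, $0<b<q$, and $a \equiv b \equiv 1 \bmod 2$. Since $p$ is odd, the odd integers strictly between $0$ and $2p$ are $1,3,\ldots,2p-1$, a total of $p$ values. Since $q$ is odd, the odd integers strictly between $0$ and $q$ are $1,3,\ldots,q-2$, a total of $(q-1)/2$ values. Hence the number of admissible pairs is $p \cdot \frac{q-1}{2}$.

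Finally, since $\sigma_{(2p,q,n)}(t) = \prod_{(a,b)} Y_{(n,a,b)}(t)$ and the degree of a product is the sum of the degrees, I would conclude
\[
\deg \sigma_{(2p,q,n)}(t) = p \cdot \frac{q-1}{2} \cdot \frac{N-1}{2} = \frac{(N-1)p(q-1)}{4}.
\]
There is no real obstacle here; the only subtle point to double-check is that the leading coefficient of each $Y_{(n,a,b)}(t)$ is nonzero (so that no degree drops occur when the factors are multiplied), which is immediate from the fact that $T_{N+1}$ and $T_{N-1}$ have different degrees and hence their difference has nonzero leading term. One should also observe that the normalization condition $\sigma_{(2p,q,n)}(0)=(-1)^{np(q-1)/2}$ rescales by a nonzero constant and so does not alter the degree.
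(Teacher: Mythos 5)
Your argument is correct and coincides with the paper's own proof: both count the admissible pairs $(a,b)$ as $\frac{p(q-1)}{2}$, use that each factor $Y_{(n,a,b)}(t)$ has degree $\frac{N-1}{2}$, and multiply. Your added remarks on the nonvanishing leading coefficients and the normalization are fine but not needed beyond what the paper records.
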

\begin{proof}
The number of the pairs $(a,b)$ is given by $\frac{p(q-1)}{2}$. 
As the degree of $Y_{(n,a,b)}(t)$ is $\frac{N-1}{2}$, 
then the degree of of $\sigma_{(2p,q,n)}(t)$ is given by 
$\frac{(N-1)p(q-1)}{4}$.
\end{proof}
%%%%%%%%%%%%%
\section{3-term relations}

Finally we prove 3-term realtions for each factor $Y_{(n,a,b)}(t)$ as follows. 

\begin{proposition}
For any $n$, it holds that 
\[
Y_{(n+1,a,b)}(t)=D(t)Y_{(n,a,b)}(t)-Y_{(n-1,a,b)}(t)
\]
where 
$D(t)=2T_{2p q }\left(\frac{\sqrt{t}}{2\sqrt{C_{2p, q,a,b}}}\right)$. 
\end{proposition}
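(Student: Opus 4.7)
The plan is to reduce the claim to a three-term Chebyshev identity in a single variable $x=\frac{\sqrt{t}}{2\sqrt{C_{(2p,q,a,b)}}}$, so that $Y_{(n,a,b)}(t)$ becomes $X_n(x)$ and the asserted recursion follows from the product-to-sum formula in Proposition~3.5(7). Setting $c=C_{(2p,q,a,b)}$ for brevity, note that $D(t)=2T_{2pq}(x)$ after this substitution, so the statement to prove is the polynomial identity $X_{n+1}(x)=2T_{2pq}(x)X_{n}(x)-X_{n-1}(x)$.

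First I would unify the $n>0$ and $n<0$ formulas. For $n>0$, $N=2pqn+1$, so $N+1=2pqn+2$ and $N-1=2pqn$, giving $X_{n}(x)=\frac{T_{2pqn+2}(x)-T_{2pqn}(x)}{2(x^{2}-1)}$. For $n<0$, $N=-(2pqn+1)$, so $N+1=-2pqn$ and $N-1=-2pqn-2$; by $T_{-k}(x)=T_{k}(x)$ the numerator equals $T_{2pqn}(x)-T_{2pqn+2}(x)$, and the overall sign $-1$ in the definition of $Y_{(n,a,b)}$ cancels this, yielding the same formula $X_{n}(x)=\frac{T_{2pqn+2}(x)-T_{2pqn}(x)}{2(x^{2}-1)}$. (A direct check at $n=0$ gives $X_0(x)=\frac{(2x^2-1)-1}{2(x^2-1)}=1$, consistent with the remark.)

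Next I would apply Proposition~3.5(7) with $m=2pq$ twice, to $k=2pqn+2$ and $k=2pqn$:
\[
2T_{2pq}(x)T_{2pqn+2}(x)=T_{2pq(n+1)+2}(x)+T_{2pq(n-1)+2}(x),
\]
\[
2T_{2pq}(x)T_{2pqn}(x)=T_{2pq(n+1)}(x)+T_{2pq(n-1)}(x).
\]
Subtracting the second from the first and dividing by $2(x^{2}-1)$ gives exactly
\[
2T_{2pq}(x)X_{n}(x)=X_{n+1}(x)+X_{n-1}(x),
\]
which rearranges to the desired three-term relation. Substituting back $x=\frac{\sqrt{t}}{2\sqrt{c}}$ converts $X_n$ into $Y_{(n,a,b)}(t)$ and $2T_{2pq}(x)$ into $D(t)$, completing the proof.

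The only real subtlety is step one, matching the $n>0$ and $n<0$ normalisations so that one common recurrence covers both signs; the sign prefix in the definition of $Y_{(n,a,b)}$ is precisely what makes the identity $T_{-k}=T_{k}$ convert the $n<0$ expression into the same shape as the $n>0$ one. Once this bookkeeping is done, the recursion is an immediate consequence of the Chebyshev product-to-sum identity and requires no further calculation.
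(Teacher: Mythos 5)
Your proof is correct and uses essentially the same argument as the paper: the three-term relation for $X_n$ follows from the product-to-sum identity $2T_m(x)T_k(x)=T_{m+k}(x)+T_{m-k}(x)$ applied to the numerator $T_{N+1}-T_{N-1}$, and then one substitutes $x=\frac{\sqrt{t}}{2\sqrt{C_{(2p,q,a,b)}}}$. Your preliminary step showing that the unified formula $X_n(x)=\frac{T_{2pqn+2}(x)-T_{2pqn}(x)}{2(x^2-1)}$ holds for all integers $n$ (via $T_{-k}=T_k$ and the sign convention, plus the check $X_0=1$) is a small but genuine improvement: it lets a single computation cover all $n$, whereas the paper runs the identity for $n>0$ and then disposes of the cases $n=0$ and $n<0$ separately and only in outline.
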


\begin{proof}
Recall Prop. 3.2 (7); 
\[
2T_m(x)T_n(x)=T_{m+n}(x)+T_{m-n}(x).
\]
Then if $n>0$ we have 
\
\[
\begin{split}
2T_{2p q }(x)X_n(x)
&=2T_{2p q }(x)\left(\frac{T_{2p q n+2}(x)-T_{2p q n}(x)}{2(x^2-1)}\right)\\
&=\frac{(T_{2p q +2p q n+2}(x)+T_{2p q n+2-2p q }(x))-(T_{2p q +2p q n}(x)+T_{2p q n-2p q }(x))}{2(x^2-1)}\\
&=\frac{T_{2p q (n+1)+2}(x)-T_{2p q (n+1)}(x)+T_{2p q (n-1)+2}(x)-T_{2p q (n-1)}(x))}{2(x^2-1)}\\
&=X_{n+1}(x)+X_{n-1}(x).
\end{split}
\]
Therefore it can be seen that 
\[
X_{n+1}(x)=2T_{2p q }(x)X_n(x)-X_{n-1}(x)
\]
and 
\[
Y_{(n+1,a,b)}(t)
=2T_{2p q }\left(\frac{\sqrt{t}}{2\sqrt{C_{(2p,q,a,b)}}}\right)
Y_{(n,a,b)}(t)-Y_{(n-1,a,b)}(t).
\]

If n=0, 3-term relation is 
\[
Y_{(1,a,b)}(t)=D(t)Y_{(0,a,b)}(t)-Y_{(-1,a,b)}(t).
\]
It can be seen by direct computation 
\[
\begin{split}
2T_{2pq}(x)X_{0}(x)-X_{-1}(x)
&=2T_{2pq}(x)-X_{-1}(x)\\
&=X_{1}(x).
\end{split}
\]

If $n<0$, it can be also proved. 
\end{proof}

We show some examples. 
First we treat $(2,3)$-torus knot again. 

\begin{example}
Put $p=1, q=3$. 
In this case $a=b=1$. 
Then we see 
\[
C_{2,3,1,1}=\left(1-\cos\frac\pi 2\right)\left(1-\cos\frac{\pi}{3}\right)=\frac12.
\]
By applyng the theorem 4.3 and the proposition 5.1,
\[
\begin{split}
\sigma_{(2,3,-1)}(t)
&=\frac{T_{6}\left(\frac{\sqrt{t}}{\sqrt{2}}\right)
-T_{4}\left(\frac{\sqrt{t}}{\sqrt{2}}\right)}{2(1-\left(\frac{\sqrt{t}}{\sqrt{2}})^{2}\right)}\\
&=-4t^{2}+6t-1.\\
\sigma_{(2,3,0)}(t)&=1.\\
\sigma_{(2,3,1)}(t)&=8t^{3}-20t^{2}+12 t-1.
\end{split}
\]
\end{example}

We show one more example. 
\begin{example}
Here put $(2p,q)=(2,5)$. 
In this case $(a,b)=(1,1)$ or $(1,3)$ and  
the constants $C_{(2,5,1,1)}, C_{(2,5,1,3)}$ are given as follows:
\[
\begin{split}
C_{(2,5,1,1)}&=(1-\cos\frac\pi 2)(1-\cos\frac{\pi}{5})\\
&=1-\cos\frac{\pi}{5}\\
&=\frac{1}{4} \left(3-\sqrt{5}\right).
\end{split}
\]
\[
\begin{split}
C_{(2,5,1,3)}
&=(1-\cos\frac\pi 2)(1-\cos\frac{3\pi}{5})\\
&=1-\cos\frac{3\pi}{5}\\
&=\frac{1}{4} \left(3+\sqrt{5}\right).
\end{split}
\]
First we put $n=-1$. 
By Theorem 4.3, 
\[
\begin{split}
\sigma_{(2,5,-1)}(t)
&=Y_{(-1,1,1)}(t)Y_{(-1,1,3)}(t)\\
&=X_{-1}\left(\frac{\sqrt{t}}{2\sqrt{C_{2,5,1,1}}}\right)
X_{-1}\left(\frac{\sqrt{t}}{2\sqrt{C_{2,5,1,3}}}\right)\\
&=4C_{(2,5,1,1)}C_{(2,5,1,3)}
\frac{T_{10}\left(\frac{\sqrt{t}}{2\sqrt{C_{2,5,1,1}}}\right)
-T_{8}\left(\frac{\sqrt{t}}{2\sqrt{C_{2,5,1,1}}}\right)}
{t-2C_{(2,5,1,1)}}
\frac{T_{10}\left(\frac{\sqrt{t}}{2\sqrt{C_{2,5,1,3}}}\right)
-T_{8}\left(\frac{\sqrt{t}}{2\sqrt{C_{2,5,1,3}}}\right)}
{t-2C_{(2,5,1,3)}}\\
&=64 t^{10}+384 t^9-2880 t^8+5952 t^7+2336 t^6\\
&\ -14856 t^5+12192 t^4-4608 t^3+820 t^2-60 t+1.
\end{split}
\]
By the definition,  
\[
\sigma_{(2p,q,0)}(t)=1.
\]
By applying the 3-term realtion 
\[
Y_{(1,a,b)}(t)=2 T_{10}\left(\frac{\sqrt{t}}{2C_{(2,5,a,b)}}\right)Y_{(0,2p,q)}(t)
-Y_{(-1,a,b)}(t),
\]
we obtain 
\[
\begin{split}
\sigma_{(2,5,1)}(t)=
&
256 t^{12}+384 t^{11}-16064 t^{10}+61056 t^9-72000 t^8
\\
&-57888 t^7+197424 t^6-172824 t^5+273408 t^4
\\
&-16632 t^3+1880 t^2-90t+1.
\end{split}
\]
\end{example}
%%%%%%%%%%%%%%%%%%%%%%%%%%%%%%%%%%%%%%%%%%%%%%%%%%%%%%%%%%

%%%%%%%%%%%%%%%%%%%%%%%%%%%%%%%%

\end{document}